\documentclass{article}[12pt]

\usepackage{fullpage, verbatim,graphicx,hyperref}
\usepackage[stable]{footmisc}
\usepackage{amsmath, amssymb, amsthm, amsfonts}

\newtheorem{theorem}{Theorem}
\newtheorem{corollary}[theorem]{Corollary}
\newtheorem{lemma}[theorem]{Lemma}
\theoremstyle{definition}
\newtheorem{example}[theorem]{Example}

\newcommand{\R}{\mathbb{R}}
\newcommand{\C}{\mathbb{C}}

\newcommand{\D}{\mathbb{D}}
\newcommand{\dbar}{\overline{\partial}}

\renewcommand{\Re}{\mathrm{Re}\,}
\renewcommand{\phi}{\varphi}

\author{Charles Z. Martin}
\date{\today}
\title{Perturbations of the Dirichlet Problem and Error Bounds}

\begin{document}

\maketitle
\begin{abstract}
\noindent
  The Dirichlet problem on a bounded planar domain is more readily understood and solved for the Laplace operator than it is for a
  Schr\"odinger operator. When the potential function is small, we might hope to approximate the solution to the
  Schr\"odinger equation with the solution to the Laplace equation. In this vein
  we develop a series expansion for the solution and give explicit bounds on the
  error terms when truncating the series. We also examine a handful of examples and
  derive similar results for the Green function and Dirichlet--to--Neumann map.
\end{abstract}

\section{Introduction}


Consider a boundary value problem with Dirichlet data, in the form
\begin{equation}\label{general_equation}
  \begin{cases}
    L\phi = 0 & \textrm{ in } D \\
    \phi = f & \textrm{ on } \partial D,
  \end{cases}
\end{equation}
where $L$ is a second--order, linear, elliptic differential operator and
$D$ is a bounded domain in the plane with sufficiently nice boundary. 
In trying to solve this problem,
there are a few potential obstacles; the domain could be difficult to handle, the operator might be overly complicated,
or---most commonly---a combination of the two. In this paper we discuss a method of approximating \eqref{general_equation}
that arises from viewing $L$ as a perturbation of the Laplacian and bound the resulting error. The approach is motivated as follows.

Suppose that $L = L_\epsilon$ denotes an elliptic operator depending on some perturbation parameter $\epsilon$,
with $L_0 = \Delta$. Let's denote the solution to \eqref{general_equation} by $\phi_\epsilon$; the unperturbed solution is denoted
$\phi_0$. These functions are identical on $\partial D$, so their difference satisfies
\begin{equation*}
  \begin{cases}
    L_\epsilon (\phi_\epsilon - \phi_0) = -L_\epsilon\phi_0 & \textrm{ in } D\\
    \phi_\epsilon - \phi_0 = 0 & \textrm{ on } \partial D.
  \end{cases}
\end{equation*}
Since $L_\epsilon\approx \Delta$, we have $-L_\epsilon \phi_0$ is very small---in fact, it vanishes to zeroth order in $\epsilon$.
What remains is an inhomogeneous problem with zero boundary data, which can be solved with knowledge of the Green function of
$L_\epsilon$. Thus we have reduced the problem of approximating $\phi_\epsilon$ to that of approximating a Green function.
A well--known, classical formula due to Hadamard (see \cite{Garabedian}, for example) approximates the change in a Green function due to
perturbations of the domain, but we'll instead make use of a result approximating the change arising from operator variation,
as detailed below.

Now let's be more specific. Consider a Schr\"odinger operator $L = \Delta - \epsilon u$, where $u\geq 0$ is a smooth scalar function.
Given a point $w\in D$, the Green function $g_w^*$ of $L$ is defined as the solution to
\begin{equation*}
  \begin{cases}
    Lg^*_w = \delta_w & \textrm{ in } D \\
    g^*_w = 0 & \textrm{ on } \partial D.
  \end{cases}
\end{equation*}
In the case $\epsilon=0$ we have $L=\Delta$ and denote the Green function simply by $g_w$. In other words, $g^*_w$
is the perturbation of $g_w$.
The following variational formula is derived, discussed and proved in \cite{Martin}.
\begin{theorem}\label{Schrodinger}
  Consider a bounded domain $D\subset \C$ with $C^1$ boundary and let $u\in C^\infty(\overline{D})$.
  Fix $w\in D$ and define the integral operator
  \begin{equation*}
    T\phi(z) = \int_D \phi u g_z\, dA.
  \end{equation*}
  The Green function $g_w^*$ of the Schr\"odinger operator $\Delta-\epsilon u$ satisfies
  \begin{equation*}
     g^*_w = g_w + \epsilon Tg_w + o(\epsilon)
  \end{equation*}
  as $\epsilon \to 0$, where the convergence of $o(\epsilon)$ is uniform.
  Furthermore, a full series expansion is given by
  \begin{equation} \label{Schrodinger_series_eq}
    g^*_w = \sum_{n=0}^\infty \epsilon^n T^n g_w.
  \end{equation}
\end{theorem}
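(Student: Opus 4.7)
The plan is to express the difference $h := g^*_w - g_w$ as the solution of an inhomogeneous equation with zero boundary data, convert this to a Fredholm integral equation, and obtain the full series by Neumann iteration. Since both $g_w$ and $g^*_w$ vanish on $\partial D$ and have matching $\delta_w$ distributional sources (under $\Delta$ and $\Delta - \epsilon u$ respectively), the singular sources cancel in the difference, leaving
\[ (\Delta - \epsilon u) h = \epsilon u g_w \ \text{in } D, \qquad h = 0 \ \text{on } \partial D. \]
Rewriting as $\Delta h = \epsilon u (g_w + h)$ and applying the Laplacian's Dirichlet Green operator $G$ to both sides---observing that $T\phi = G(u\phi)$ is immediate from the definition of $T$---converts the PDE into the integral equation $(I - \epsilon T) h = \epsilon T g_w$.

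From this equation I would prove the first-order statement first. The kernel $g_z(\zeta)$ has a logarithmic singularity but is integrable with $\sup_z \|g_z\|_{L^1(D)} < \infty$, so $T$ is a bounded operator on $L^\infty(D)$ with norm $C_T \leq \|u\|_\infty \sup_z \|g_z\|_{L^1}$. While $g_w \notin L^\infty$, a Cauchy--Schwarz estimate (using $\sup_z \|g_z\|_{L^2(D)} < \infty$ and $g_w \in L^2$) shows $T g_w \in L^\infty$, so for $\epsilon C_T < 1$ the integral equation yields $\|h\|_\infty \leq \epsilon \|T g_w\|_\infty/(1 - \epsilon C_T) = O(\epsilon)$, whence $\|\epsilon T h\|_\infty = O(\epsilon^2)$ and $g^*_w = g_w + \epsilon T g_w + o(\epsilon)$ uniformly. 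Iterating the integral equation then gives $h = \sum_{n=1}^{N} \epsilon^n T^n g_w + \epsilon^N T^N h$, with a remainder bounded by $(\epsilon C_T)^N \|h\|_\infty \to 0$ in sup norm, establishing the Neumann series $g^*_w = \sum_{n=0}^\infty \epsilon^n T^n g_w$.

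The main obstacle I anticipate is reconciling the logarithmic singularity of $g_w$ at $w$ with a uniform-convergence statement: only the $n=0$ term is unbounded, so the tail $\sum_{n\geq 1} \epsilon^n T^n g_w$ converges uniformly on all of $\overline{D}$, while ``uniform'' for the full series should be interpreted either as locally uniform on $\overline{D} \setminus \{w\}$ or as $L^p(D)$-convergence for any $p < \infty$, a space in which $g_w$ does live and on which $T$ is bounded. A secondary technical point is justifying the initial PDE manipulation against the $\delta_w$ source; this is painless because $g^*_w$ and $g_w$ share the same leading logarithmic singularity $\tfrac{1}{2\pi}\log|z-w|$ at $w$, making $h$ smooth in a neighborhood of $w$ so that the equation for it is a classical (rather than distributional) PDE.
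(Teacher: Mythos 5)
This theorem is not proved in the paper at all---it is imported verbatim from \cite{Martin}, so there is no in-paper argument to compare against. Your proposal is the natural self-contained route, and it is essentially correct: the difference $h=g^*_w-g_w$ solves $(\Delta-\epsilon u)h=\epsilon u g_w$ with zero boundary values because the $\delta_w$ singularities cancel, inverting the Laplacian gives the fixed-point equation $h=\epsilon T(g_w+h)$ (equivalently $g^*_w=g_w+\epsilon Tg^*_w$), and Neumann iteration yields both the first-order statement and the full series. This is exactly the identity the paper itself relies on downstream: the proof of theorem \ref{Green_error} starts from $R_n=\epsilon^n T^n g^*_w$, which is precisely your iterated equation, and the quantitative ingredients you invoke ($\sup_z\|g_z\|_2<\infty$, Cauchy--Schwarz to show $T$ maps $L^2$ into $L^\infty$) are the paper's lemmas \ref{T_bound} and \ref{Diameter_bound}. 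Two small caveats: $h$ is not smooth near $w$---$\Delta h=\epsilon u(g_w+h)$ has a logarithmic singularity there---but it is continuous and bounded, which is all your contraction argument needs (and the a priori boundedness of $h$, i.e.\ that $g^*_w$ carries the same $\tfrac{1}{2\pi}\log|z-w|$ singularity, deserves the explicit acknowledgment you give it, either from standard parametrix theory or by constructing $g^*_w$ from the integral equation itself); and the full series \eqref{Schrodinger_series_eq} converges only under the smallness condition $\epsilon\|T\|<1$ implicit in your Neumann iteration, which is consistent with the paper, since its remainder bound in theorem \ref{Green_error} is likewise only decaying when $\epsilon\|u\|_\infty\,\mathrm{diam}(D)/\sqrt{12}<1$. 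Your reading of ``uniform'' convergence as applying to the tail $\sum_{n\ge 1}\epsilon^n T^n g_w$ (the only unbounded term being $g_w$ itself) is the sensible interpretation.
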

With this result we can turn to the approximation of solutions to the Dirichlet problem. Before that, however,
we take a moment to discuss error bounds for the approximations of theorem \ref{Schrodinger}.

\section{Error Bounds for Variation of the Green Function}

Although the first order correction given in theorem \ref{Schrodinger} is correct as $\epsilon \to 0$, practical computation
might demand explicit error bounds for a fixed perturbation parameter. The rest of this section is devoted to the following theorem.
\begin{theorem}\label{Green_error}
  Let $D\subset \C$ be a bounded domain with $C^1$ boundary and let $u\in C^\infty(\overline{D})$. For fixed $w\in D$
  consider the Green function $g^*_w$ of the operator $\Delta-\epsilon u$ on the domain $D$ (with $\epsilon > 0$).
  The remainder $R_{n}$ in the expansion
  \begin{equation*}
     g^*_w = g_w + \epsilon Tg_w + \cdots + \epsilon^{n-1}T^{n-1} g_w + R_{n}
  \end{equation*}
  is uniformly bounded by
  \begin{equation*}
    \|R_{n}\|_\infty \leq \left(\frac{\epsilon \|u\|_\infty \mathrm{diam}(D)}{\sqrt{12}}\right)^n\frac{\mathrm{diam}(D)}{4\sqrt{3\pi}}.
  \end{equation*}
\end{theorem}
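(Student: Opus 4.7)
The plan is to exploit the clean identity $R_n = \epsilon^n T^n g_w^*$, which falls out of the series in Theorem~\ref{Schrodinger} by factoring $\epsilon^n T^n$ from the tail:
\begin{equation*}
  R_n = \sum_{k\ge n}\epsilon^k T^k g_w = \epsilon^n T^n\sum_{j\ge 0}\epsilon^j T^j g_w = \epsilon^n T^n g_w^*.
\end{equation*}
A standard maximum-principle argument then gives $|g_w^*|\le|g_w|$ pointwise (since $u,\epsilon\ge 0$, the difference $g_w^*-g_w$ is superharmonic with zero boundary data), so all subsequent work reduces to bounding iterates of $T$ applied to the classical Green function of~$\Delta$.

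The core analytic input is a pair of disc integrals. On $B_R = B(w,R)$ the Dirichlet Green function is $\tfrac{1}{2\pi}\log(|\cdot-w|/R)$, and polar coordinates give
\begin{equation*}
  \int_{B_R}|g_{B_R}(\cdot,w)|\,dA = \frac{R^2}{4},\qquad \int_{B_R}|g_{B_R}(\cdot,w)|^2\,dA = \frac{R^2}{8\pi}.
\end{equation*}
To transfer these to an arbitrary $D$ of diameter $d$, I would invoke Jung's theorem in the plane---every planar set of diameter $d$ lies in a closed disc of radius $d/\sqrt{3}$---together with the monotonicity $|g_D(\cdot,w)|\le|g_B(\cdot,w)|$ for $D\subset B$, which is a one-line maximum-principle computation. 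The payoff is the uniform estimates
\begin{equation*}
  \sup_{w\in D}\int_D|g_w|\,dA \leq \frac{\mathrm{diam}(D)^2}{12},\qquad \sup_{w\in D}\int_D|g_w|^2\,dA \leq \frac{\mathrm{diam}(D)^2}{24\pi},
\end{equation*}
in which Jung's factor $1/\sqrt{3}$ is precisely what converts the naive disc constants $1/4$ and $1/(8\pi)$ into the $1/12$ and $1/(24\pi)$ visible in the stated bound.

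With these in hand I would estimate $\|T^n g_w^*\|_\infty$ by chaining mixed-norm estimates. The innermost application $Tg_w^*$ must be handled with Cauchy--Schwarz, $\|Tg_w^*\|_\infty \le \|u\|_\infty\sup_z\|g_z\|_2\,\|g_w^*\|_2$; by $|g_w^*|\le|g_w|$ and the $L^2$ estimate above, this is finite and produces the $\mathrm{diam}(D)/(4\sqrt{3\pi})$ prefactor. Each of the remaining $n-1$ applications of $T$ acts on a bounded function, so the direct $L^\infty\to L^\infty$ bound $\|T\phi\|_\infty \le \|u\|_\infty\bigl(\sup_z\int|g_z|\,dA\bigr)\|\phi\|_\infty$ contributes the geometric ratio, which (after the square root of the $L^1$ estimate is absorbed into the exponent) produces the factor $\bigl(\|u\|_\infty\mathrm{diam}(D)/\sqrt{12}\bigr)$ per iteration. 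Multiplying through by $\epsilon^n$ gives the claimed bound directly, with no series to sum.

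The main obstacle is arithmetic rather than conceptual. Jung's theorem is the one indispensable geometric input: without its sharp radius $d/\sqrt{3}$ one only obtains $1/2$ in place of $1/\sqrt{12}$, corrupting both the ratio and the prefactor. The secondary subtlety is that the singular function $g_w^*$ and the bounded intermediate iterates must be treated with different norms, so the Cauchy--Schwarz step has to be placed at exactly the innermost iteration, where exactly one $L^2 \to L^\infty$ factor enters the chain and the remaining factors reduce to pure $L^\infty$ operator norms. Everything else is careful bookkeeping of the constants produced by the two disc integrals.
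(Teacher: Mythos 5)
Your skeleton matches the paper's: the identity $R_n=\epsilon^n T^n g_w^*$, the comparison $g_w\le g_w^*\le 0$ to replace $g_w^*$ by $g_w$, and Jung's theorem plus domain monotonicity plus explicit disc computations. But two of your steps have genuine gaps. First, your disc integrals are computed only for the pole at the \emph{center} of the disc (you even define $B_R=B(w,R)$, centered at the pole), whereas the disc produced by Jung's theorem contains $D$ but is not centered at $w$. After the monotonicity step you are therefore facing $\int_B |g_B(\cdot,w)|^2\,dA$ with $w$ off-center, and your claimed uniform bounds $\sup_w\|g_w\|_2^2\le \mathrm{diam}(D)^2/(24\pi)$ do not follow from the centered computation alone. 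What is needed is precisely the paper's lemma that on a disc the map $z\mapsto\|g_z\|_2$ is maximized at the center ($\|g_z\|_2\le\|g_0\|_2=(8\pi)^{-1/2}$ on $\D$), which the paper proves by a M\"obius change of variables and a pointwise kernel inequality; your proposal omits this and does not appear to notice it is required. (For the $L^1$ quantity there is an easy exact formula, $-\int_B g_B(\cdot,w)\,dA=(R^2-|w-c|^2)/4$, but the $L^2$ statement is a real lemma.)

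Second, the norm chaining does not produce the stated bound. If you apply Cauchy--Schwarz at the innermost iteration and then estimate the remaining $n-1$ applications of $T$ as maps $L^\infty\to L^\infty$, each such application costs $\|u\|_\infty\sup_z\int_D|g_z|\,dA\le \|u\|_\infty\,\mathrm{diam}(D)^2/12=\|u\|_\infty(\mathrm{diam}(D)/\sqrt{12})^2$, not $\|u\|_\infty\,\mathrm{diam}(D)/\sqrt{12}$. The phrase ``the square root of the $L^1$ estimate is absorbed into the exponent'' is not a valid operation: your chain yields a bound of the form $\epsilon^n(\|u\|_\infty\,\mathrm{diam}(D)^2/12)^{n-1}\,\|u\|_\infty\,\mathrm{diam}(D)^2/(24\pi)$, which is a different (and, whenever $\mathrm{diam}(D)>\sqrt{12}$, strictly weaker) estimate than the one claimed, so the theorem as stated is not established. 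The paper's proof avoids this by never leaving $L^2$ in the middle of the chain: Lemma \ref{T_bound} bounds the $L^2\to L^2$ (Hilbert--Schmidt type) norm $\|T\|\le\|u\|_\infty\bigl(\iint_{D^2}|g|^2\,dA^2\bigr)^{1/2}$, this is applied $n-1$ times to $g_w^*$ (whose $L^2$ norm is controlled by $\|g_w\|_2$), and Cauchy--Schwarz is used only at the \emph{outermost} application to exit to the sup norm; the prefactor then comes from $\|u\|_\infty\|g_z\|_2\|g_w\|_2\le\|u\|_\infty\,\mathrm{diam}(D)^2/(24\pi)$ after pulling one geometric factor out. So your placement of the single $L^2\to L^\infty$ step at the innermost iteration, with pure $L^\infty$ bounds thereafter, is exactly where the argument breaks; to recover the stated geometric ratio you must measure the intermediate iterates in $L^2$, as the paper does.
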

For simplicity, we often write $|R_n|$ in place of $\|R_n\|_\infty$.
Proof of this theorem requires bounding the tail of the series \eqref{Schrodinger_series_eq}, ultimately reducing to easily--understood
quantities such as the diameter of the domain. The following lemmas serve this purpose.
\begin{lemma} \label{T_bound}
  With the same assumptions as the previous theorem,
  given $\phi\in L^2(D)$ we have the pointwise bound $|T\phi(z)| \leq \|u\|_\infty \|g_z\|_2 \|\phi\|_2$. Furthermore,
  the operator norm of $T: L^2\to L^2$ satisfies
  \begin{equation}\label{norm_bound}
    \|T\| \leq \|u\|_\infty \left(\int_D \|g_\xi\|_2^2\, dA(\xi) \right)^{1/2} = \|u\|_\infty \left(\iint_{D^2} |g(\xi,\tau)|^2\,
    dA^2(\xi,\tau)\right)^{1/2}.
  \end{equation}
\end{lemma}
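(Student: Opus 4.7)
The plan is to treat $T$ as an integral operator with kernel $K(z,w) = u(w)g_z(w) = u(w)g(z,w)$ and prove the two bounds in sequence, using essentially Cauchy--Schwarz twice. The pointwise bound is the input to the operator-norm bound, so I would establish it first.

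For the pointwise estimate, I would start from the definition $T\phi(z) = \int_D \phi u g_z\, dA$, take absolute values inside, and pull out $\|u\|_\infty$ using $|u(w)| \leq \|u\|_\infty$ for a.e.\ $w$. The remaining integral $\int_D |\phi(w)|\,|g_z(w)|\,dA(w)$ is bounded by $\|\phi\|_2\|g_z\|_2$ via Cauchy--Schwarz, yielding $|T\phi(z)| \leq \|u\|_\infty\|g_z\|_2\|\phi\|_2$. The only subtle point is making sure $g_z \in L^2(D)$, which is true in two dimensions because the Green function has only a logarithmic singularity at $z$, and $\log|\cdot|$ is locally square-integrable in $\R^2$.

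For the operator norm bound, I would square the pointwise estimate and integrate over $z \in D$:
\begin{equation*}
  \|T\phi\|_2^2 = \int_D |T\phi(z)|^2\,dA(z) \leq \|u\|_\infty^2 \|\phi\|_2^2 \int_D \|g_z\|_2^2\,dA(z),
\end{equation*}
then take square roots. Dividing by $\|\phi\|_2$ and passing to the supremum gives the first form of the bound. To obtain the second form in \eqref{norm_bound}, I would expand $\|g_\xi\|_2^2 = \int_D |g(\xi,\tau)|^2 dA(\tau)$ and apply Fubini--Tonelli (legal since the integrand is nonnegative) to combine the iterated integral into a double integral over $D^2$. The symmetry $g(\xi,\tau)=g(\tau,\xi)$ of the Green function makes the two orders of integration interchangeable in a more conceptual sense, but it is not actually needed for the equality stated.

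There is no real obstacle here; the main thing to be careful about is integrability of $g_z$ in both variables, which I would briefly justify from the known logarithmic behavior of the planar Green function near its pole plus boundedness of $D$. Essentially, the lemma just records that $T$ is a Hilbert--Schmidt operator with kernel $u(w)g(z,w)$, and the displayed bound is its Hilbert--Schmidt norm; this perspective can be mentioned but the direct Cauchy--Schwarz proof above is cleanest.
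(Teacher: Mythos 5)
Your proposal is correct and follows the same route as the paper: Cauchy--Schwarz (after pulling out $\|u\|_\infty$) gives the pointwise bound, and squaring and integrating over $z$ gives the $L^2\to L^2$ operator norm estimate, with Tonelli turning the iterated integral into the double integral over $D^2$. Your added remarks on the local square-integrability of the logarithmic singularity and the Hilbert--Schmidt interpretation are sound but not needed beyond what the paper does.
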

\begin{proof}
  The Cauchy--Schwarz inequality gives
  \begin{equation*}
    |T\phi(\xi)| = \left|\int_D ug_\xi\phi\, dA\right| \leq \|u\|_\infty \int_D |g_\xi\phi|\, dA \leq \|u\|_\infty \|g_\xi\|_2 \|\phi\|_2.
  \end{equation*}
  Integrating $|T\phi(\xi)|^2$ over $D$ yields
  \begin{equation*}
    \|T\phi\|_2 \leq \|u\|_\infty \|\phi\|_2 \left(\int_D \|g_\xi\|_2^2\, dA(\xi)\right)^{1/2},
  \end{equation*}
  from which the result follows.
\end{proof}
\begin{lemma}
    For $z\in \D$, the Green function $g_z$ of the Laplacian on $\D$ satisfies $\|g_z\|_2 \leq \|g_0\|_2 = (8\pi)^{-1/2}$.
  \end{lemma}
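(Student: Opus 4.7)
The strategy is to derive a closed-form expression for $\|g_z\|_2^2$ as a function of $|z|$ alone, and then verify the stated inequality by an elementary Maclaurin series comparison.

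First I would handle the baseline: since $g_0(w) = (2\pi)^{-1}\log|w|$ on $\D$, polar coordinates combined with the elementary integral $\int_0^1 r(\log r)^2\,dr = 1/4$ give $\|g_0\|_2^2 = 1/(8\pi)$, confirming the claimed identity. For general $z\in\D$, use the standard representation $g_z(w) = (2\pi)^{-1}\log|\phi_z(w)|$, where $\phi_z(w) = (z-w)/(1-\bar z w)$ is the involutive M\"obius automorphism of $\D$ carrying $z$ to $0$. Substituting $v = \phi_z(w)$ and using $dA(w) = (1-|z|^2)^2|1-\bar z v|^{-4}\,dA(v)$ (which follows from $|\phi_z'|^2 = (1-|z|^2)^2/|1-\bar z w|^4$ together with the involution identity for $\phi_z$) gives
\begin{equation*}
  \|g_z\|_2^2 = \frac{1}{4\pi^2}\int_\D (\log|v|)^2\,\frac{(1-|z|^2)^2}{|1-\bar z v|^4}\,dA(v).
\end{equation*}

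Next I would expand $(1-\bar z v)^{-2} = \sum_{n\geq 0}(n+1)(\bar z v)^n$ and its conjugate, then integrate in polar coordinates $v=re^{i\theta}$; the angular integration annihilates all off-diagonal terms in the resulting double sum. The radial integrals evaluate to $\int_0^1 r^{2n+1}(\log r)^2\,dr = 1/[4(n+1)^3]$, and summing via $\sum_{n\geq 0}|z|^{2n}/(n+1) = -|z|^{-2}\log(1-|z|^2)$ yields the closed form
\begin{equation*}
  \|g_z\|_2^2 = -\frac{(1-|z|^2)^2\,\log(1-|z|^2)}{8\pi\,|z|^2}.
\end{equation*}

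Setting $x = |z|^2 \in (0,1)$, the desired inequality $\|g_z\|_2 \leq \|g_0\|_2$ now reduces to $-\log(1-x) \leq x/(1-x)^2$, which I would dispatch by comparing the two Maclaurin expansions $-\log(1-x) = \sum_{n\geq 1} x^n/n$ and $x/(1-x)^2 = \sum_{n\geq 1} n\,x^n$ term by term, using the trivial bound $1/n \leq n$ for $n\geq 1$ and positivity of $x^n$. The principal obstacle is really only the bookkeeping in the double-series / change-of-variables calculation that produces the closed form; once that is in place, the concluding inequality is essentially immediate.
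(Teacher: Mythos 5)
Your proposal is correct, and it diverges from the paper's argument at the key step. Both you and the paper start with the same M\"obius change of variables, reducing the problem to the integral $\frac{1}{4\pi^2}\int_\D (\log|v|)^2 (1-|z|^2)^2|1-\bar z v|^{-4}\,dA(v)$. The paper then stops short of evaluating this: it bounds the difference $\|g_0\|_2^2-\|g_z\|_2^2$ from below by replacing the Jacobian factor with a radially symmetric majorant and checking a sign condition, never producing the value of $\|g_z\|_2$. You instead expand $(1-\bar z v)^{-2}$ in a power series, use orthogonality of $e^{in\theta}$ to kill off-diagonal terms, and evaluate the radial integrals $\int_0^1 r^{2n+1}(\log r)^2\,dr = 1/[4(n+1)^3]$ to get the closed form $\|g_z\|_2^2 = -(1-|z|^2)^2\log(1-|z|^2)/(8\pi|z|^2)$, after which the inequality is the elementary termwise comparison $\sum x^n/n \le \sum n x^n$. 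Your closed form is genuinely correct --- it can be cross-checked against the paper's own formula for $\int_\D g_zg_w\,dA$ in the first example by letting $w\to z$ --- so your route costs a bit more bookkeeping but buys strictly more: an exact expression for $\|g_z\|_2$ (which in principle could sharpen the constants in Theorems \ref{Green_error} and \ref{Dirichlet_error}), and it avoids the somewhat delicate pointwise/angular comparison on which the paper's shorter argument relies.
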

  \begin{proof}
    First note that $\|g_z\|_2$ only depends upon $r=|z|$ and that $\|g_1\|_2 = 0$, so we need to consider the integral
    \begin{equation*}
      \int_\D \ln^2 \left|\frac{w-r}{1-rw}\right|\, dA(w)
    \end{equation*}
    when $0\leq r<1$.
    The substitution $\xi = (w-r)/(1-rw)$ preserves the unit disk, giving
    \begin{equation*}
      \int_\D \ln^2 \left|\frac{w-r}{1-rw}\right|\, dA(w) = \int_\D \ln^2|\xi|  \frac{(1-r^2)^2}{|1+r\xi|^4}\, dA(\xi).
    \end{equation*}
    Consider the difference $\|g_0\|^2_2 - \|g_z\|^2_2$ as follows:
    \begin{align*}
      \|g_0\|^2_2 - \|g_z\|^2_2 &= \frac{1}{4\pi^2}\int_\D \ln^2|\xi|\left(1- \frac{(1-r^2)^2}{|1+r\xi|^4}\right)\, dA(\xi) \\
      &\geq  \frac{1}{4\pi^2}\int_\D \ln^2|\xi|\left(1- \frac{(1-r^2)^2}{1-r^4|\xi|^4}\right)\, dA(\xi) \\
      &= \frac{1}{4\pi^2}\int_\D \ln^2|\xi|\left(\frac{2r^2-r^4(1+|\xi|^4)}{1-r^4|\xi|^4}\right)\, dA(\xi).
    \end{align*}
    Since $0\leq r<1$ and $\xi\in\D$, we have $r^4(1+|\xi|^4) \leq 2r^2$ and the desired inequality follows. The computation of
    $\|g_0\|_2$ is a standard exercise in polar coordinates.
\end{proof}
\begin{lemma} \label{Diameter_bound}
  Let $D$ be as in theorem \ref{Schrodinger}. Then for $z\in D$ we have the bounds
  \begin{align*}
    \|g_z\|_2 \leq \frac{\mathrm{diam}(D)}{\sqrt{24\pi}} \quad \textrm{ and } \quad 
    \left(\iint_{D^2} |g(\xi,\tau)|^2 \, dA^2(\xi,\tau) \right)^{1/2} \leq \frac{\mathrm{diam}(D)}{\sqrt{12}}.
  \end{align*}
\end{lemma}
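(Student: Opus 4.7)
The plan is to compare the Green function on $D$ with that on a disk containing $D$, reducing both inequalities to the unit-disk bound already furnished by the previous lemma. Two standard ingredients do the work: domain monotonicity of the Dirichlet Green function for the Laplacian, and Jung's theorem on the smallest enclosing disk in the plane.

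First I would establish the pointwise inequality $|g_z^D(w)| \leq |g_z^\Omega(w)|$ on $D$ whenever $D\subset \Omega$. The logarithmic singularities at $z$ cancel in $g_z^\Omega - g_z^D$, so this difference extends to a harmonic function on $D$; on $\partial D$ it equals $g_z^\Omega$, which is non-positive throughout $\Omega$, so the maximum principle forces $g_z^\Omega \leq g_z^D \leq 0$ on $D$. Next, Jung's theorem places $D$ inside a closed disk $B$ of radius $R = \mathrm{diam}(D)/\sqrt{3}$, say centered at $z_0$. The Dirichlet Green function is conformally invariant in two dimensions, and the affine scaling $\zeta \mapsto z_0 + R\zeta$ therefore produces the identity
\begin{equation*}
    \|g_z^B\|_{L^2(B)} = R\, \bigl\|g_{(z-z_0)/R}^\D\bigr\|_{L^2(\D)}.
\end{equation*}
Chaining monotonicity, scaling, and the previous lemma yields $\|g_z^D\|_2 \leq \|g_z^B\|_2 \leq R/\sqrt{8\pi} = \mathrm{diam}(D)/\sqrt{24\pi}$, which is the first inequality.

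For the second bound, Fubini's theorem rewrites the double integral as $\int_D \|g_\xi\|_2^2 \, dA(\xi)$. Substituting the first inequality and then the isodiametric inequality $|D|\leq \pi\,\mathrm{diam}(D)^2/4$ reduces the rest to arithmetic. I do not anticipate a serious conceptual obstacle: the argument is a straightforward combination of the maximum principle (for monotonicity), the conformal scaling of the Dirichlet Green function on a disk, and Jung's constant $1/\sqrt{3}$ in the plane. The one place that requires some care is producing the right enclosing disk via Jung's theorem rather than the cruder ball $B(z,\mathrm{diam}(D))$, since that cruder choice would lose the factor of $\sqrt{3}$ in the final constant.
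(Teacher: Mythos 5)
Your treatment of the first inequality is correct and is essentially the paper's own argument: pointwise domain monotonicity of the Green function (you supply the maximum-principle proof that the paper only cites), Jung's theorem to produce an enclosing disk of radius $R=\mathrm{diam}(D)/\sqrt{3}$, and scale invariance of $g$ to transfer the unit-disk bound $\|g_0\|_2=(8\pi)^{-1/2}$, giving $\|g_z\|_{L^2(D)}\leq \|g_z^B\|_{L^2(B)} \leq R/\sqrt{8\pi}=\mathrm{diam}(D)/\sqrt{24\pi}$. No complaints there.

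The second inequality is where the proposal has a genuine gap: the ``arithmetic'' you defer does not close. Fubini, your first bound, and the isodiametric inequality give
\begin{equation*}
  \iint_{D^2}|g(\xi,\tau)|^2\,dA^2(\xi,\tau) \;\leq\; \frac{\mathrm{diam}(D)^2}{24\pi}\cdot\frac{\pi\,\mathrm{diam}(D)^2}{4} \;=\; \frac{\mathrm{diam}(D)^4}{96},
\end{equation*}
whose square root is $\mathrm{diam}(D)^2/\sqrt{96}$, not $\mathrm{diam}(D)/\sqrt{12}$; the former implies the latter only when $\mathrm{diam}(D)\leq 2\sqrt{2}$. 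The mismatch is structural rather than a matter of sharpening constants: because $g$ is scale invariant, the left-hand side scales like $\mathrm{diam}(D)^2$, so no chain of the estimates you invoke can produce a right-hand side linear in the diameter. For comparison, the paper reaches $\mathrm{diam}(D)/\sqrt{12}$ by evaluating, for a disk $D_R$ of radius $R$, $\int_{D_R}\|g_\xi\|_2^2\,dA(\xi)\leq\int_{D_R}\tfrac{R^2}{8\pi}\,dA=\tfrac{R^2}{4}$ and then setting $R=\mathrm{diam}(D)/\sqrt{3}$; your (correct) bookkeeping shows this integral is actually at most $\tfrac{R^2}{8\pi}\cdot\pi R^2=\tfrac{R^4}{8}$, i.e.\ the area factor has gone missing in that evaluation, so your route exposes a scaling discrepancy in the stated bound rather than a repairable defect in your own argument. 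What your method does prove is $\bigl(\iint_{D^2}|g|^2\,dA^2\bigr)^{1/2}\leq \mathrm{diam}(D)^2/\sqrt{96}$ (equivalently $R^2/\sqrt{8}$ with $R$ the Jung radius), and you should state it in that form rather than claim the lemma's constant.
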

\begin{proof}
  First note that, by translation and dilation, the previous lemma can be generalized to a disk $D_R$ of radius $R$ centered at $p\in \C$.
  In this case, we have that
  \begin{equation*}
    \|g_\xi\|_2 \leq \|g_p\|_2 = \frac{R}{\sqrt{8\pi}}
  \end{equation*}
  for any $\xi\in D_R$. For such a disk we have that
  \begin{equation*}
    \iint_{D_R^2} |g(\xi,\tau)|^2 \, dA^2(\xi,\tau) = \int_{D_R} \|g_\xi\|_2^2 \, dA(\xi) \leq \int_{D_R} \frac{R^2}{8\pi} \, dA
    = \frac{R^2}{4}.
  \end{equation*}
  Recall the monotonicity of the Green function as a domain functional: for $\Omega_1 \subseteq \Omega_2$ we have
  \begin{equation*}
    g_{\Omega_2} \leq g_{\Omega_1} < 0
  \end{equation*}
  throughout $\Omega_1$. Thus if $D$ is contained within a disk of radius $R$, we have
  \begin{align*}
    \left(\iint_{D^2} |g_D(\xi,\tau)|^2 \, dA^2(\xi,\tau) \right)^{1/2}&\leq
      \left(\iint_{D^2} |g_{D_R}(\xi,\tau)|^2 \, dA^2(\xi,\tau) \right)^{1/2} \\
      &\leq \left(\iint_{{D_R}^2} |g_{D_R}(\xi,\tau)|^2 \, dA^2(\xi,\tau) \right)^{1/2} \\
      & \leq \frac{R}{2}.
  \end{align*}
  The results follow from Jung's theorem \cite{Jung}: a domain of diameter $d$ is contained in a disk of radius $d/\sqrt{3}$.
\end{proof}
\begin{proof}[Proof of Theorem \ref{Green_error}]
  We have that $R_n = \epsilon^n (T)^n g^*_w$. Lemma \ref{T_bound} gives
  \begin{equation*}
    |R_n| \leq \epsilon^n \|u\|_\infty \|g_w\|_2 \|T^{n-1} g_w^*\|_2 \leq \epsilon^n \|u\|_\infty \|g_w\|_2 \|T\|^{n-1}\|g_w^*\|_2.
  \end{equation*}
  Note that theorem \ref{Schrodinger} implies that $g_w \leq g_w^* \leq 0$, and hence $\|g_w^*\|_2 \leq \|g_2\|_2$.
  With lemmas \ref{T_bound} and \ref{Diameter_bound} we obtain
  \begin{equation*}
    |R_n| \leq \epsilon^n \|u\|_\infty \left(\frac{\mathrm{diam}(D)}{\sqrt{24\pi}}\right)^2
    \left(\|u\|_\infty \frac{\mathrm{diam}(D)}{\sqrt{12}}\right)^{n-1}
  \end{equation*}
  which rearranges into the result.
\end{proof}
\begin{example}
  Consider the problem of estimating the Green function $g_w^*$ of $\Delta - 1$ on the unit disk $\D$.
  Viewing $\Delta-1$ as a perturbation of $\Delta$ gives
  \begin{equation}\label{Green_perturb_example}
    g_w^*(z) = g_w(z) + \int_\D g_zg_w\, dA + R_2(z),
  \end{equation}
  with $R_2$ bounded by
  \begin{equation}\label{Green_bound_example}
    |R_2| \leq \frac{1}{6\pi\sqrt{3}} \simeq 0.0306.
  \end{equation}
  We can evaluate the integral in \eqref{Green_perturb_example} as follows. Note that $g_wg_z = \partial_n (g_w g_z) = 0$
  on $\partial \D$, so we have that
  \begin{align*}
    \int_\D g_zg_w\, dA &= \int_\D g_w(\xi) g_z(\xi) \Delta\left(\frac{|\xi|^2}{4}\right)\, dA(\xi) \\
    &= \int_\D \frac{|\xi|^2}{4}\Delta(g_z(\xi)g_w(\xi))\, dA(\xi) \\
    &= \frac{1}{4}\int_\D |\xi|^2 (g_z\delta_w + g_w \delta_z)\, dA(\xi)
      + \frac{1}{2}\int_\D |\xi|^2\nabla g_z(\xi)\cdot \nabla g_w(\xi)\, dA(\xi) \\
    &= g_w(z)\left(\frac{|z|^2+ |w|^2}{4}\right) + \frac{1}{2}\int_\D |\xi|^2\nabla g_z(\xi)\cdot \nabla g_w(\xi)\, dA(\xi).
  \end{align*}
  This last integral was evaluated in the final example of \cite{Martin}; when $zw\neq 0$ the result is
  \begin{equation*}
    \int_\D g_z g_w\, dA = g_w(z) \left(\frac{|z|^2+|w|^2}{4}\right) +
      \frac{1}{8\pi}\Re\left[\left(z\overline{w} + \frac{\overline{z}}{\overline{w}} + \frac{w}{z} - \frac{1}{z\overline{w}}\right)
        \log(1-z\overline{w}) - 2 z\overline{w}\log|z-w|\right]
  \end{equation*}
  When $w = 0$ we instead have
  \begin{equation*}
    \int_\D g_z g_0\, dA = \frac{1 - |z|^2 (1-\log|z|)}{8\pi}.
  \end{equation*}
  As a comparison, the actual Green function with singularity at 0 is given by
  \begin{equation*}
    g_0^*(z) = \frac{K_0(1) - K_0(|z|)}{2\pi},
  \end{equation*}
  where $K_0$ denotes the modified Bessel function of the second kind.
  The error $R_2(z)$ is given by
  \begin{equation}\label{Green_real_error}
    R_2(z) = \frac{K_0(1) - K_0(|z|)}{2\pi} - \frac{\log|z|}{2\pi} - \frac{1 -  |z|^2 (1 - \log|z|)}{8\pi},
  \end{equation}
  which takes values in the interval $[0,0.0088]$, confirming error bound \eqref{Green_bound_example}. A basic plot of $R_2$
  is shown in figure 1.
  \begin{figure}[h]
    \centering
    \includegraphics[width=0.75\textwidth]{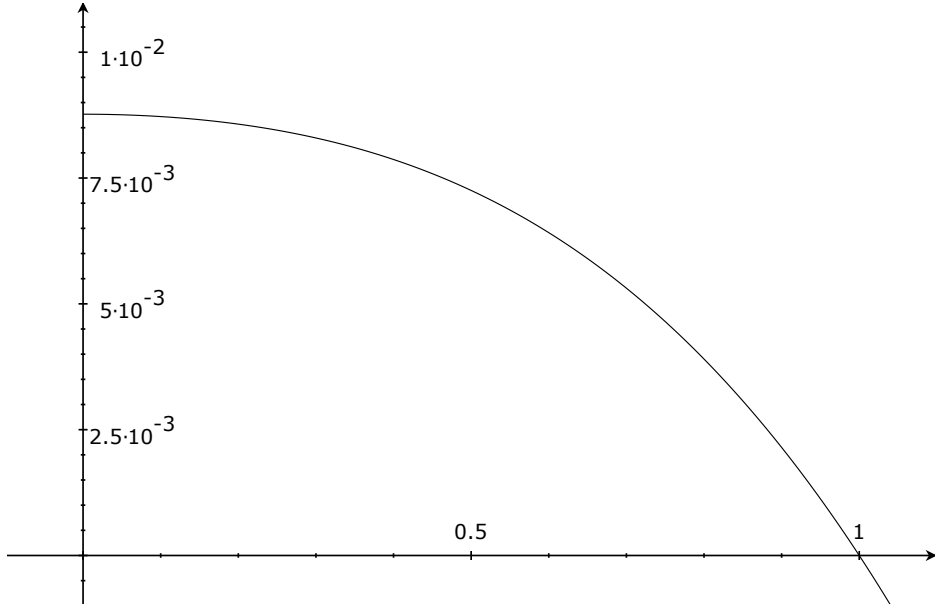}
    \caption{A plot of $R_2(|z|)$, as given in equation \eqref{Green_real_error}.}
  \end{figure}
\end{example}

\section{Application to Boundary--Value Problems}

\subsection{Variation of the Dirichlet Problem}
Consider solving the Dirichlet problem on a domain $D$:
\begin{equation} \label{Dirichlet}
  \begin{cases}
    (\Delta-\epsilon u) \phi_\epsilon = 0 & \textrm{ in } D \\
    \phi_\epsilon = f &\textrm{ on } \partial D
  \end{cases}
\end{equation}
where $f$ is a bounded Borel function, $u\geq 0$ is smooth in a neighborhood of $\overline{D}$
and $\epsilon \geq 0$ is small. If we ignored the $\epsilon u$ term
in the problem---that is, assume $\epsilon$ is zero---we can estimate the error with the following theorem and corollary.
\begin{theorem}\label{Dirichlet_variation}
  Let $D\subset \C$ be a bounded domain with $C^1$ boundary and suppose $u\in C^\infty(\overline{D})$ is a positive function.
  For $\epsilon \geq 0$ and a bounded Borel function $f$ the solution $\phi_\epsilon$ to the Dirichlet problem \eqref{Dirichlet} satisfies
  \begin{equation*}
    \phi_\epsilon(z) = \phi_0(z) + \epsilon \int_D u \phi_0 g_z\, dA + o(\epsilon)
  \end{equation*}
  as $\epsilon \to 0$, where the error term converges uniformly in $z$. Using the notation of theorem \ref{Schrodinger},
  a full series expansion is given by
  \begin{equation} \label{Schrodinger_series}
    \phi_\epsilon(z) = \sum_{n=0}^\infty \epsilon^n T^n \phi_0.
  \end{equation}
\end{theorem}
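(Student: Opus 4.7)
The plan is to recast the Dirichlet problem as a fixed-point equation involving the operator $T$, then apply the same Neumann-series argument used to prove Theorem \ref{Schrodinger}. Since $\Delta\phi_\epsilon = \epsilon u \phi_\epsilon$ while $\Delta \phi_0 = 0$, the difference $\phi_\epsilon - \phi_0$ solves $\Delta(\phi_\epsilon - \phi_0) = \epsilon u \phi_\epsilon$ in $D$ with vanishing boundary data. Representing it through the Laplacian's Green function gives
\[
\phi_\epsilon(z) - \phi_0(z) = \int_D g_z(\xi)\, \epsilon u(\xi)\, \phi_\epsilon(\xi)\, dA(\xi) = \epsilon T\phi_\epsilon(z),
\]
so $(I - \epsilon T)\phi_\epsilon = \phi_0$. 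Iterating this identity $n$ times yields
\[
\phi_\epsilon = \sum_{k=0}^{n-1}\epsilon^k T^k\phi_0 + \epsilon^n T^n \phi_\epsilon.
\]

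The task then reduces to showing $\epsilon^n T^n \phi_\epsilon \to 0$ uniformly on $\overline{D}$; this simultaneously establishes the series representation \eqref{Schrodinger_series} and, by taking $n = 2$, the first-order asymptotic with uniform $o(\epsilon)$ remainder. The weak maximum principle for $\Delta - \epsilon u$ (applicable because the zeroth-order coefficient $-\epsilon u$ is nonpositive) yields the uniform bound $\|\phi_\epsilon\|_\infty \leq \|f\|_\infty$, hence $\|\phi_\epsilon\|_2 \leq \|f\|_\infty |D|^{1/2}$ independently of $\epsilon$. Lemma \ref{T_bound} then supplies the pointwise estimate
\[
|\epsilon^n T^n \phi_\epsilon(z)| \leq \epsilon^n \|u\|_\infty \|g_z\|_2 \|T\|^{n-1} \|\phi_\epsilon\|_2,
\]
while Lemma \ref{Diameter_bound} bounds $\|g_z\|_2$ and $\|T\|$ by $\mathrm{diam}(D)$-dependent constants. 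Provided $\epsilon \|u\|_\infty \mathrm{diam}(D)/\sqrt{12} < 1$, the geometric factor $\epsilon^n \|T\|^{n-1}$ forces the remainder to zero uniformly in $z$.

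The step I expect to require the most care is the Green-function representation $\phi_\epsilon - \phi_0 = \int_D g_z \cdot \epsilon u \phi_\epsilon\, dA$ when $f$ is only a bounded Borel function, since then $\phi_\epsilon$ and $\phi_0$ attain $f$ in the Perron sense rather than continuously. I would handle this by first establishing the identity for continuous $f$, where both sides are classical, and then approximating a general bounded Borel $f$ by continuous functions: the maximum-principle bound on $\phi_\epsilon$ supplies the uniform control needed to pass to the limit, and the local integrability of the logarithmic singularity of $g_z$ against the bounded source $\epsilon u \phi_\epsilon$ keeps the integrals absolutely convergent. Once the integral equation is secured, the iteration above goes through verbatim and delivers \eqref{Schrodinger_series}.
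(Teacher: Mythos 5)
Your proof is correct, but it takes a different route from the paper. You work with the unperturbed operator: since $\Delta(\phi_\epsilon-\phi_0)=\epsilon u\phi_\epsilon$ with zero boundary values, the classical Green representation gives the fixed-point identity $\phi_\epsilon=\phi_0+\epsilon T\phi_\epsilon$, which you iterate and then kill the remainder $\epsilon^nT^n\phi_\epsilon$ using the maximum principle together with Lemmas \ref{T_bound} and \ref{Diameter_bound}. The paper instead applies the perturbed operator to the difference, getting $(\Delta-\epsilon u)(\phi_\epsilon-\phi_0)=\epsilon u\phi_0$, writes $\phi_\epsilon(z)=\phi_0(z)+\epsilon\int_D u\phi_0\,g^*_z\,dA$ with the perturbed Green function $g^*_z$, and then imports the expansion $g^*_z=\sum\epsilon^nT^ng_z$ from Theorem \ref{Schrodinger}. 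Your argument is self-contained (it never invokes Theorem \ref{Schrodinger}), and it delivers as a byproduct the exact remainder identity $R_n=\epsilon^nT^n\phi_\epsilon$ that the paper only extracts later in the proof of Theorem \ref{Dirichlet_error}, so it effectively proves Theorems \ref{Dirichlet_variation} and \ref{Dirichlet_error} in one pass; the cost is that the full series requires your explicit smallness condition $\epsilon\|u\|_\infty\mathrm{diam}(D)/\sqrt{12}<1$ (a restriction that is implicit in the paper as well) and the maximum principle for $\Delta-\epsilon u$, which the paper also uses later. The paper's route is shorter once Theorem \ref{Schrodinger} is available, though its passage from the $g^*_z$ series to $\sum\epsilon^nT^n\phi_0$ silently uses the symmetry of the Green kernel and a Fubini rearrangement, which your approach avoids. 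One minor caveat: your limiting argument for merely bounded Borel $f$ is sketched rather loosely (bounded Borel data are not uniform limits of continuous functions, so one should approximate boundedly pointwise and use harmonic-measure/dominated-convergence reasoning rather than the maximum principle alone), but the paper does not address this point at all, so it is not a gap relative to the paper's own standard.
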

\begin{proof}
  Let $g^*_w$ denote the Green function of the operator $\Delta-\epsilon u$ in the region $D$.
  Note that
  \begin{equation*}
    \begin{cases}
      (\Delta-\epsilon u)(\phi_\epsilon - \phi_0) = \epsilon u \phi_0 & \textrm{ in } D \\
      \phi_\epsilon - \phi_0 = 0 & \textrm{ on } \partial D,
    \end{cases}
  \end{equation*}
  whence an expression for $\phi_\epsilon$
  is given by
  \begin{equation*}
    \phi_\epsilon(z) = \phi_0(z) + \epsilon\int_D u\phi_0 g^*_z\, dA.
  \end{equation*}
  From the perturbation formula \eqref{Schrodinger_series} we have
  \begin{equation*}
    \phi_\epsilon(z) = \phi_0(z) + \epsilon\int_D u\phi_0 g_z\, dA + o(\epsilon),
  \end{equation*}
  as desired. The full series expansion follows from instead using the full series given in theorem \ref{Schrodinger}.
\end{proof}
\begin{corollary}
  With the same assumptions as the previous theorem, the linearization of $\phi_\epsilon$ has the pointwise bound
  \begin{equation*}
    |\delta\phi_\epsilon(z)| \leq \|u\|_2\|g_z\|_2 \|f\|_\infty.
  \end{equation*}
\end{corollary}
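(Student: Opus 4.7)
The plan is to read off an explicit formula for the linearization $\delta\phi_\epsilon$ from Theorem \ref{Dirichlet_variation}, bound it by Cauchy--Schwarz, and close the loop using the maximum principle for harmonic functions.

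First I would identify the linearization. The expansion
\begin{equation*}
  \phi_\epsilon(z) = \phi_0(z) + \epsilon\int_D u \phi_0 g_z\, dA + o(\epsilon)
\end{equation*}
of Theorem \ref{Dirichlet_variation} gives
\begin{equation*}
  \delta\phi_\epsilon(z) = \int_D u\phi_0 g_z\, dA,
\end{equation*}
so the task is purely to estimate this integral pointwise in $z$.

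Next I would apply Cauchy--Schwarz: pairing $u$ with $g_z$ in $L^2(D)$ and pulling $\phi_0$ out in $L^\infty$,
\begin{equation*}
  \left|\int_D u\phi_0 g_z\, dA\right| \leq \|\phi_0\|_\infty \int_D |u||g_z|\, dA \leq \|\phi_0\|_\infty \|u\|_2 \|g_z\|_2.
\end{equation*}
Finally, since $\phi_0$ is the harmonic extension of the boundary data $f$ to $D$, the maximum principle yields $\|\phi_0\|_\infty \leq \|f\|_\infty$, which substitutes in to complete the claimed inequality.

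The hard part is essentially nothing beyond being clear about what ``linearization'' means; if a referee objects to invoking the maximum principle for a merely bounded Borel $f$, I would note that $\phi_0$ is defined as the Poisson integral of $f$, which automatically satisfies $\|\phi_0\|_\infty\le\|f\|_\infty$ on $D$. Everything else is a one--line Cauchy--Schwarz estimate.
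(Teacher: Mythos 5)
Your proposal is correct and follows the paper's own argument exactly: the paper likewise bounds $\delta\phi_\epsilon(z)=\int_D u\phi_0 g_z\,dA$ by the Cauchy--Schwarz inequality after using the maximum modulus principle to get $\sup_D|\phi_0|\le\sup_{\partial D}|f|$. Your write-up simply makes explicit the steps the paper leaves terse, including the sensible remark that $\phi_0$ as a Poisson integral of bounded Borel data still satisfies $\|\phi_0\|_\infty\le\|f\|_\infty$.
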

\begin{proof}
  From the maximum modulus principle for harmonic functions, $\sup_D |\phi_0| \leq \sup_{\partial D} |f|$. The result follows from this and the
  Cauchy-Schwarz inequality.
\end{proof}
The following lemma aids computation of simple examples in the disk.
\begin{lemma}\label{Green_area}
  Given a point $z\in \D$ and an integer $n\geq 0$ we have
  \begin{equation*}
    \int_\D |\xi|^{2n} g_z(\xi)\, dA(\xi) = -\frac{1-|z|^{2n+2}}{4(n+1)^2},
  \end{equation*}
  where $g_z$ denotes the Green function of the Laplacian on $\D$ with singularity at $z$.
\end{lemma}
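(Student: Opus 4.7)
The plan is to reduce the integral to a Green's identity computation by finding an explicit antiderivative for $|\xi|^{2n}$ under the Laplacian. Since $\Delta(|\xi|^{2k}) = 4k^2 |\xi|^{2k-2}$, setting $h(\xi) = |\xi|^{2n+2}/(4(n+1)^2)$ gives $\Delta h = |\xi|^{2n}$, and on $\partial \D$ the function $h$ takes the constant value $1/(4(n+1)^2)$.

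Next I would subtract off this boundary value and consider $v(\xi) = h(\xi) - \frac{1}{4(n+1)^2}$. This $v$ still satisfies $\Delta v = |\xi|^{2n}$, but now it vanishes on $\partial \D$. The defining property of the Green function then yields
\begin{equation*}
  v(z) \;=\; \int_\D g_z(\xi)\, \Delta v(\xi)\, dA(\xi) \;=\; \int_\D |\xi|^{2n}\, g_z(\xi)\, dA(\xi),
\end{equation*}
and evaluating $v(z) = \frac{|z|^{2n+2} - 1}{4(n+1)^2}$ gives exactly the claimed formula.

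The only thing that might be called an obstacle is justifying the reproducing identity for $v$, since $g_z$ has a logarithmic singularity at $z$. This is standard: apply Green's second identity $\int(h\Delta g_z - g_z \Delta h)\, dA = \int_{\partial}(h \partial_\nu g_z - g_z \partial_\nu h)\, ds$ on $\D \setminus B_\rho(z)$, use $g_z = 0$ on $\partial \D$ together with $\int_{\partial \D}\partial_\nu g_z\, ds = 1$ (from the divergence theorem applied to the distributional identity $\Delta g_z = \delta_z$), and then let $\rho \to 0$; the contributions from $\partial B_\rho(z)$ vanish because $g_z \sim \frac{1}{2\pi}\log\rho$ while the perimeter shrinks like $\rho$, and because $h$ is smooth at $z$. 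Alternatively, one may invoke the reproducing formula directly from any standard reference. Either way, the computation is routine and the formula follows immediately.
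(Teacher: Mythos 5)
Your argument is correct and is essentially the paper's own proof: both use the antiderivative $|\xi|^{2n+2}/(4(n+1)^2)$ of $|\xi|^{2n}$ and the Poisson--Jensen/Green identity for $g_z$. The only cosmetic difference is that you subtract the constant boundary value $1/(4(n+1)^2)$ so the boundary term vanishes, whereas the paper keeps it and evaluates it using the fact that $\partial_n g_z\, ds$ is a probability measure on $\partial\D$ --- the same computation in different bookkeeping.
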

\begin{proof}
  First we note that
  \begin{equation*}
    \Delta\left(\frac{|\xi|^{2n+2}}{4(n+1)^2}\right) = 4\partial\dbar\left(\frac{(\xi\overline{\xi})^{n+1}}{4(n+1)^2}\right)
      = |\xi|^{2n}.
  \end{equation*}
  We use the Poisson--Jensen formula
  \begin{equation*}
    v(z) = \int_{\partial \D} v\partial_n g_z\, ds + \int_\D g_z\Delta v\, dA
  \end{equation*}
  to write
  \begin{align*}
    \frac{|z|^{2n+2}}{4(n+1)^2}
    &= \int_{\partial \D} \frac{|\zeta|^{2n+2}}{4(n+1)^2}\partial_n g_z(\zeta) ds(\zeta) + \int_\D |\xi|^{2n} g_z(\xi)\, dA(\xi) \\
    &= \int_{\partial\D} \frac{\partial_n g_z}{4(n+1)^2}\, ds + \int_\D |\xi|^{2n} g_z(\xi)\, dA(\xi) \\
    &= \frac{1}{4(n+1)^2} + \int_\D |\xi|^{2n} g_z(\xi)\, dA(\xi),
  \end{align*}
  where we've used the facts that $|\zeta|=1$ on $\partial \D$ and that $\partial_n g_z\, ds$ is a
  probability measure on $\partial \D$. We conclude that
  \begin{equation*}
    \int_\D |\xi|^{2n} g_z(\xi)\, dA(\xi) = -\frac{1-|z|^{2n+2}}{4(n+1)^2},
  \end{equation*}
  as desired.
\end{proof}

\begin{example}
  For a simple example, consider the Dirichlet problem for the Helmholtz equation on the unit disk $\D$:
  \begin{equation*}
  \begin{cases}
    (\Delta - a) \phi_a = 0 & \textrm{ in } \D \\
    \phi_a = 1 &\textrm{ on } \partial \D,
  \end{cases}
  \end{equation*}
  where $0 < a \ll 1$. Clearly $\phi_0\equiv 1$, so we obtain
  \begin{equation} \label{Dirichlet_example}
    \phi_a(z) = 1 + a\int_\D g_z\, dA + o(a),
  \end{equation}
  where the Green function $g_z$ is given by
  \begin{equation*}
    g_z(\xi) = \frac{1}{2\pi}\ln\left|\frac{\xi-z}{1-\overline{z}\xi}\right|.
  \end{equation*}
  Lemma \ref{Green_area} gives
  \begin{equation*}
    \int_\D g_z\, dA(\xi)
      = -\frac{1-|z|^2}{4},
  \end{equation*}
  so equation \eqref{Dirichlet_example} becomes
  \begin{equation*}
    \phi_a(z) = 1 - \frac{a}{4}(1-|z|^2) + o(a).
  \end{equation*}
\end{example}

\subsection{Explicit Error Bounds}

In this section we prove an analogue to theorem \ref{Green_error} for the perturbation of the Dirichlet problem considered above. Afterwards, we discuss a few examples to illustrate the approach's strengths and shortcomings.
\begin{theorem}\label{Dirichlet_error}
  Let $D\subset \C$ be a bounded domain with $C^1$ boundary and suppose $u\in C^\infty(\overline{D})$ is a positive function.
  For $\epsilon \geq 0$ and a bounded Borel function $f$ we can write the solution $\phi_\epsilon$ to the Dirichlet problem \eqref{Dirichlet}
  as
  \begin{equation*}
    \phi_\epsilon(z) = \phi_0(z) + \epsilon T\phi_0 + \cdots + \epsilon^{n-1} T^{n-1} \phi_0 + R_{n}(z),
  \end{equation*}
  where $T$ is as defined in theorem \ref{Schrodinger}. A uniform bound for the error $R_{n}$ is given by
  \begin{equation*}
    \|R_{n}\|_\infty \leq \left(\frac{\epsilon \|u\|_\infty \mathrm{diam}(D)}{\sqrt{12}}\right)^{n}
    \frac{\|f\|_\infty \sqrt{\mathrm{area}(D)}}{\sqrt{2\pi}}.
  \end{equation*}
\end{theorem}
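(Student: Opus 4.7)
The plan is to mirror the proof of Theorem \ref{Green_error} almost step for step, since the only real difference between the two perturbation problems is what gets "fed in" to the operator $T$. Starting from the series expansion \eqref{Schrodinger_series}, the remainder takes the clean form
\begin{equation*}
  R_n = \sum_{k=n}^\infty \epsilon^k T^k \phi_0 = \epsilon^n T^n \phi_\epsilon,
\end{equation*}
where the last equality follows by relabeling the summation index and applying the series once more.

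From here I would extract one copy of $T$ pointwise using Lemma \ref{T_bound}, then bound the remaining $n-1$ copies in the $L^2$ operator norm:
\begin{equation*}
  |R_n(z)| = \epsilon^n |T^n \phi_\epsilon(z)| \leq \epsilon^n \|u\|_\infty \|g_z\|_2 \|T\|^{n-1} \|\phi_\epsilon\|_2.
\end{equation*}
Lemma \ref{Diameter_bound} immediately supplies $\|g_z\|_2 \leq \mathrm{diam}(D)/\sqrt{24\pi}$ and, combined with the operator norm inequality \eqref{norm_bound}, the bound $\|T\| \leq \|u\|_\infty \mathrm{diam}(D)/\sqrt{12}$.

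The one new ingredient, replacing the monotonicity estimate $\|g_w^*\|_2 \leq \|g_w\|_2$ used in the Green function case, is a uniform $L^\infty$ bound on the perturbed solution itself. Since $u \geq 0$ and $\epsilon \geq 0$, the operator $\Delta - \epsilon u$ satisfies the weak maximum principle for operators with nonpositive zeroth order coefficient (in the sign convention $\Delta - c$ with $c \geq 0$), so a solution of \eqref{Dirichlet} attains its extrema on the boundary and $\|\phi_\epsilon\|_\infty \leq \|f\|_\infty$. Trivially then $\|\phi_\epsilon\|_2 \leq \|f\|_\infty \sqrt{\mathrm{area}(D)}$. This is the step I expect to require the most care to cite precisely, though it is standard PDE.

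Assembling the pieces gives
\begin{equation*}
  |R_n(z)| \leq \epsilon^n \|u\|_\infty \cdot \frac{\mathrm{diam}(D)}{\sqrt{24\pi}} \cdot \left(\frac{\|u\|_\infty \mathrm{diam}(D)}{\sqrt{12}}\right)^{n-1} \cdot \|f\|_\infty \sqrt{\mathrm{area}(D)},
\end{equation*}
and a small rearrangement, noting that $1/\sqrt{24\pi} = (1/\sqrt{12})(1/\sqrt{2\pi})$, folds the one stray factor into the geometric series base and yields the stated bound. No nontrivial obstacle remains beyond verifying that the maximum principle applies in the generality claimed (bounded Borel $f$, which is handled by taking the solution to mean the Perron solution if $f$ is not continuous).
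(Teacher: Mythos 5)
Your proposal is correct and follows essentially the same route as the paper's proof: write $R_n = \epsilon^n T^n\phi_\epsilon$, peel off one pointwise application of $T$ via Lemma \ref{T_bound}, bound the remaining factors by $\|T\|^{n-1}\|\phi_\epsilon\|_2$ using \eqref{norm_bound} and Lemma \ref{Diameter_bound}, and finish with the maximum principle $\|\phi_\epsilon\|_\infty \leq \|f\|_\infty$ for $\Delta-\epsilon u$. The arithmetic rearrangement via $\sqrt{24\pi} = \sqrt{12}\,\sqrt{2\pi}$ is exactly how the stated constant arises, so nothing is missing.
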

\begin{proof}
  From theorem \ref{Dirichlet_variation} we have that
  \begin{equation*}
    R_n(z) = \epsilon^n T^n \phi_\epsilon (z).
  \end{equation*}
  Using the pointwise bound in lemma \ref{T_bound} gives
  \begin{align*}
    |R_n(z)| &\leq  \epsilon^n \|T^n \phi_\epsilon\|_\infty \\  
    &\leq \epsilon^n \|u\|_\infty \|T^{n-1}\phi_\epsilon\|_2 \sup_z \|g_z\|_2 \\
    &\leq \epsilon^n \|u\|_\infty \|T\|^{n-1} \|\phi_\epsilon\|_2 \sup_z \|g_z\|_2.
  \end{align*}
  Lemmas \ref{T_bound} and \ref{Diameter_bound} also yield
  \begin{equation*}
    \|T\| \leq \|u\|_\infty \left(\iint_{D^2} |g(\xi,\tau)|^2\, dA^2(\xi,\tau)\right)^{1/2} \leq
      \frac{\|u\|_\infty \mathrm{diam}(D)}{\sqrt{12}}.
  \end{equation*}
  Together with the bound on $\|g_z\|_2$ in lemma \ref{Diameter_bound} we have
  \begin{align*}
    |R_n(z)|
    &\leq \left(\frac{\epsilon\|u\|_\infty \mathrm{diam}(D)}{\sqrt{12}}\right)^{n} \frac{\|\phi_\epsilon\|_2}{\sqrt{2\pi}} \\
    &\leq \left(\frac{\epsilon \|u\|_\infty \mathrm{diam}(D)}{\sqrt{12}}\right)^{n}
    \frac{\|\phi_0\|_\infty \sqrt{\mathrm{area}(D)}}{\sqrt{2\pi}}.
  \end{align*}
  The result follows from the maximum principle for $\Delta - \epsilon u$; that is, $\|\phi_\epsilon\|_\infty \leq \|f\|_\infty$.
\end{proof}
\begin{corollary}\label{Dirichlet_disk_error}
  In the case when $D$ is a disk of radius $r$ the error bounds can be tightened into
  \begin{equation*}
    \|R_{n}\|_\infty \leq \left(\frac{\epsilon r \|u\|_\infty}{2}\right)^{n}
    \frac{r \|f\|_\infty}{\sqrt{2}}.
  \end{equation*}
\end{corollary}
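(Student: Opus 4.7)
The plan is to re-run the proof of Theorem \ref{Dirichlet_error} verbatim, but to bypass the invocation of Jung's theorem at the two places where it appeared. When $D$ is a disk, the sharper disk-specific bounds that were in fact derived \emph{inside} the proof of Lemma \ref{Diameter_bound} are available without the $\sqrt{3}$ loss that Jung's theorem introduces for a general domain, and using them directly tightens every constant to the claimed value.

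Concretely, first I would invoke the intermediate estimates from the proof of Lemma \ref{Diameter_bound}: for a disk $D_r$ of radius $r$ one has $\sup_{z\in D_r}\|g_z\|_2 \leq r/\sqrt{8\pi}$ and $\iint_{D_r^2} |g(\xi,\tau)|^2\, dA^2 \leq r^2/4$, so that Lemma \ref{T_bound} upgrades to $\|T\| \leq \|u\|_\infty\, r/2$. Second, I would handle the $\|\phi_\epsilon\|_2$ factor by combining the maximum principle $\|\phi_\epsilon\|_\infty \leq \|f\|_\infty$ with Cauchy--Schwarz on the disk, which yields $\|\phi_\epsilon\|_2 \leq \|f\|_\infty \sqrt{\mathrm{area}(D_r)} = \|f\|_\infty\, r\sqrt{\pi}$.

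Third, I would substitute these three bounds into the chain
\begin{equation*}
  |R_n(z)| \leq \epsilon^n \|u\|_\infty \|T\|^{n-1} \|\phi_\epsilon\|_2 \sup_z \|g_z\|_2
\end{equation*}
from the proof of Theorem \ref{Dirichlet_error}, and simplify. The $\sqrt{\pi}$ from the area estimate cancels against the $\sqrt{8\pi}$ in the denominator of $\sup_z \|g_z\|_2$, leaving a factor of $r/\sqrt{8}$ which, combined with the extra factor of $2$ needed to promote $\|T\|^{n-1}$ to $(r\|u\|_\infty/2)^n$, rearranges to $r/\sqrt{2}$, as desired.

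The only genuine step is noticing that Jung's theorem was the sole source of looseness in Theorem \ref{Dirichlet_error}; past that observation, the main obstacle is purely bookkeeping, namely tracking the powers of $r$ and the factors of $2$, $\pi$, $8$ so that they collapse to $(\epsilon r\|u\|_\infty/2)^n \cdot r\|f\|_\infty/\sqrt{2}$. No new estimates beyond those already established in the preceding lemmas are required.
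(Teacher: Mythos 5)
Your proposal is correct and is exactly the route the paper intends: the corollary is stated without proof, and it follows by rerunning the proof of Theorem \ref{Dirichlet_error} with the disk-specific estimates $\sup_z\|g_z\|_2 \leq r/\sqrt{8\pi}$, $\|T\|\leq \|u\|_\infty r/2$ from the proof of Lemma \ref{Diameter_bound}, together with $\|\phi_\epsilon\|_2 \leq \|f\|_\infty r\sqrt{\pi}$, thereby avoiding the $\sqrt{3}$ loss from Jung's theorem. The arithmetic collapses as you say (up to the trivial slip that the intermediate factor is $r^2/\sqrt{8}$ rather than $r/\sqrt{8}$ before absorbing the leftover $2/r$), yielding $(\epsilon r\|u\|_\infty/2)^n\, r\|f\|_\infty/\sqrt{2}$.
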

\begin{example}\label{dirichlet_example}
  We return to the simple case of the previous example, the Dirichlet problem
  \begin{equation*}
  \begin{cases}
    (\Delta - a) \phi_a = 0 & \textrm{ in } \D \\
    \phi_a = 1 &\textrm{ on } \partial \D.
  \end{cases}
  \end{equation*}
  If we take $a=1$ the solution is known to be $\phi_1(z) = I_0(|z|)/I_0(1)$, where $I_0$ denotes the zeroth order modified Bessel function of the
  first kind. The function $\phi_1$ is an increasing function of $|z|$, and takes values in the interval $[0.789,1]$.
  Using corollary \ref{Dirichlet_disk_error}, the zeroth order approximation $\phi_1(z) \simeq \phi_0(z) \equiv 1$ has error bounded by
  \begin{equation*}
    |\phi_1 - 1| = |R_1| \leq \left(\frac{\epsilon r \|u\|_\infty}{2}\right)^{n}
    \frac{r \|f\|_\infty}{\sqrt{2}} = \frac{1}{\sqrt{8}} \simeq 0.354.
  \end{equation*}
  The actual error is
  \begin{equation*}
    R_1(x) = 1 - \frac{I_0(x)}{I_0(1)}, 
  \end{equation*}
  a plot of which appears in figure 2.
  The first order approximation is computed in the previous example as
  \begin{equation*}
    \phi_1(z) \simeq 1 - \frac{1 - |z|^2}{4},
  \end{equation*}
  with an error bound of
  \begin{equation*}
    |R_2|\leq \frac{1}{4\sqrt{2}} \simeq 0.177.
  \end{equation*}
  Indeed, $R_2(z) = \phi_1(z) - (1 - \frac{1 - |z|^2}{4})$ takes values in the interval $[0, 0.0399]$, as seen in figure 2.
  Taking it one step further, we can use lemma \ref{Green_area} to compute the next order term:
  \begin{equation*}
    \phi_1(z) \simeq 1 - \frac{1-|z|^2}{4} - G\left(\frac{1-|z|^2}{4}\right) = 1 - \frac{1-|z|^2}{4} + \frac{|z|^4 - 4|z|^2+3}{64}.
  \end{equation*}
  The estimated error is
  \begin{equation*}
    |R_3|\leq \frac{1}{8\sqrt{2}} \simeq 0.0884,
  \end{equation*}
  though the real error is bounded by $0.0071$ (we chose not to plot $R_3$ alongside the other error functions for visibility reasons).
  Even though the error bound decreases geometrically, it is far from tight in these first few cases.
  \begin{figure}[h]
    \centering
    \includegraphics[width=0.75\textwidth]{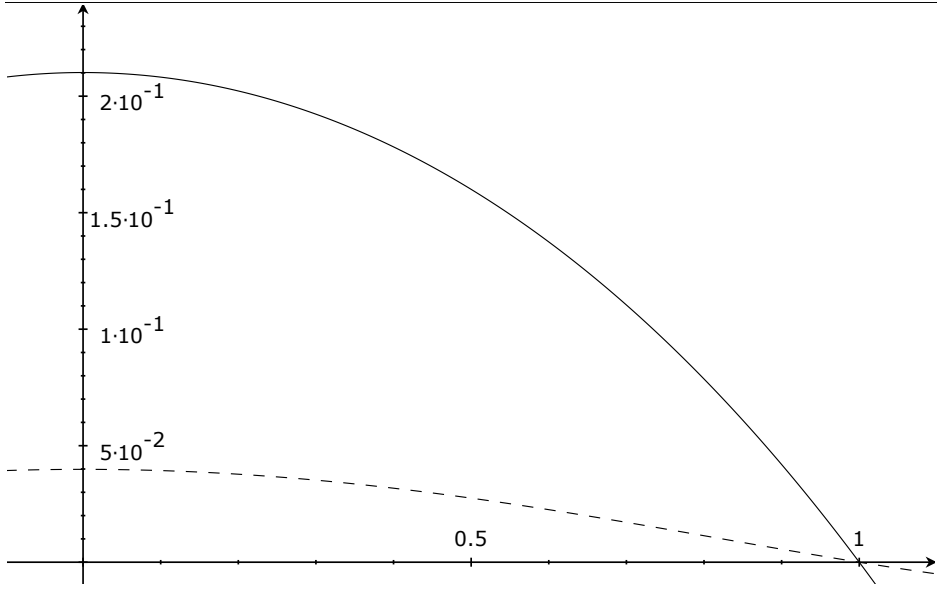}
    \caption{A plot of $R_1(|z|)$ and $R_2(|z|)$ from example \ref{dirichlet_example}. The solid line depicts $R_1$.}
  \end{figure}
\end{example}
\begin{example}
  Continuing in a similar fashion to the previous example, consider the Dirichlet problem
  \begin{equation*}
  \begin{cases}
    (\Delta - \epsilon |z|^2) \phi_\epsilon = 0 & \textrm{ in } \D \\
    \phi_\epsilon = 1 &\textrm{ on } \partial \D.
  \end{cases}
  \end{equation*}
  The solution to this problem when $\epsilon = 1$ is $\phi_1(z) = I_0(|z|^2/2)/I_0(1/2)$. In particular, $\phi_1(\D) \subseteq [0.94,1]$,
  which is a smaller range than before.
  In this situation the error in approximating $\phi_1$ by $\phi_0\equiv 1$ is less than $0.6$;
  without this knowledge, the bound of corollary \ref{Dirichlet_disk_error} gives
  \begin{equation*}
    |\phi_1-1| = |R_1| \leq \frac{1}{2\sqrt{2}} \simeq 0.354,
  \end{equation*}
  the same bound as in the previous example. Unfortunately, theorem \ref{Dirichlet_error} cannot detect that $\Delta - |z|^2$ is in some
  sense closer to $\Delta$ than $\Delta - 1$ is. This is due to the fact that $\||z|^2\|_\infty = \|1\|_\infty = 1$ on $\D$.
\end{example}
\begin{example}
  For a less trivial example we turn to the Helmholtz operator on an ellipse of small eccentricity. Consider the Dirichlet problem
  \begin{equation*}
  \begin{cases}
    (\Delta - a) \phi_a = 0 & \textrm{ in } E \\
    \phi_a = 1 &\textrm{ on } \partial E
  \end{cases}
  \end{equation*}
  on the ellipse $E$ given by
  \begin{equation*}
    x^2+\frac{y^2}{1.1^2} \leq 1.
  \end{equation*}
  Here we use the standard identification $z=x+iy$, with $x,y\in \R$. Once again note that $\phi_0\equiv 1$ and that $\phi_1(z) \simeq 1$
  with error bounded by
  \begin{equation*}
    |\phi_1 - 1| = |R_1| \leq \frac{2.2\sqrt{1.1}}{\sqrt{24}} \simeq 0.471.
  \end{equation*}
  In particular, $0.529 \leq \phi(0,0) \leq 1$.
  To compute the next order approximation, we have the following lemma.
  \begin{lemma}
    For the elliptical region $E$ given by $x^2/a^2+y^2/b^2\leq 1$ and a point $w=u+iv \in E$ we have
    \begin{equation*}
      \int_E g_w\, dA = \frac{(bu)^2 + (av)^2 - (ab)^2}{2(a^2+b^2)},
    \end{equation*}
    where $g_w$ denotes the Green function of the Laplacian on $E$.
  \end{lemma}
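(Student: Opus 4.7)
The plan is to mirror the Poisson--Jensen argument that appears in lemma \ref{Green_area}. I would look for an auxiliary function $\psi$ on $\overline{E}$ satisfying $\Delta \psi \equiv 1$ in $E$ and $\psi \equiv \mathrm{const}$ on $\partial E$. Once such a $\psi$ is in hand, the Poisson--Jensen identity
\[
  \psi(w) = \int_{\partial E} \psi\, \partial_n g_w\, ds + \int_E g_w \Delta \psi\, dA,
\]
combined with the fact that $\partial_n g_w\, ds$ is a probability measure on $\partial E$, collapses the boundary term to the constant value $\psi|_{\partial E}$. The desired integral then appears directly as $\int_E g_w\, dA = \psi(w) - \psi|_{\partial E}$.

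To construct $\psi$, I would make the quadratic ansatz $\psi(x,y) = \alpha x^2 + \beta y^2$, which is a natural generalization of the $|\xi|^2/[4(n+1)^2]$ used for the disk. The level sets of $\psi$ are ellipses similar to $\partial E$ exactly when $\alpha/\beta = b^2/a^2$, so I set $\psi = C\bigl(x^2/a^2 + y^2/b^2\bigr)$ to make $\psi$ constant on $\partial E$. Then $\Delta \psi = 2C(a^{-2}+b^{-2})$, and imposing $\Delta \psi = 1$ forces $C = a^2 b^2 / [2(a^2+b^2)]$. This yields
\[
  \psi(x,y) = \frac{b^2 x^2 + a^2 y^2}{2(a^2+b^2)}, \qquad \psi|_{\partial E} = \frac{a^2 b^2}{2(a^2+b^2)}.
\]

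Substituting into the Poisson--Jensen formula and subtracting, with $w = u+iv$, I expect to obtain
\[
  \int_E g_w\, dA = \psi(w) - \psi|_{\partial E} = \frac{b^2 u^2 + a^2 v^2 - a^2 b^2}{2(a^2+b^2)},
\]
which is precisely the claimed formula. There is essentially no obstacle here: the only creative step is spotting the right ansatz for $\psi$, and that is dictated by the requirement that its level sets match the boundary. The remainder is linear algebra and a direct appeal to Poisson--Jensen; a brief remark that $\psi \in C^\infty(\overline{E})$ and that $\partial E$ is $C^1$ (so the Poisson--Jensen formula is justified) completes the proof in the same style as lemma \ref{Green_area}.
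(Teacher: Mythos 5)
Your proof is correct and is essentially the paper's argument in a slightly different costume: the paper applies Green's second identity to $g_w$ and the quadratic $x^2/a^2+y^2/b^2-1$ (normalized to vanish on $\partial E$, so all boundary terms drop), while you rescale the same quadratic so that $\Delta\psi=1$ and invoke the Poisson--Jensen representation together with the fact that $\partial_n g_w\,ds$ is a probability measure. Both come down to the same integration by parts against a quadratic whose level sets match $\partial E$, and your computation yields the stated formula.
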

  \begin{proof}
    Since both $g_w$ and $x^2/a^2+y^2/b^2-1$ vanish on $\partial E$, Green's identity gives
    \begin{align*}
      \left(\frac{2}{a^2} + \frac{2}{b^2}\right)\int_E g_w \, dA &=
      \int_E g_w(x,y)\Delta\left(\frac{x^2}{a^2}+\frac{y^2}{b^2}-1\right)\, dA(x,y) \\
      &= \int_E \left(\frac{x^2}{a^2}+\frac{y^2}{b^2}-1\right)\Delta g_w(x,y)\, dA(x,y) \\
      &= \frac{u^2}{a^2}+\frac{v^2}{b^2}-1.\qedhere
    \end{align*}
  \end{proof}
  With lemma in hand we can compute
  \begin{equation*}
    \phi_1(x,y) = 1 + G1 + R_2 = 1 + \frac{1.21x^2+ y^2 - 1.21}{4.42} + R_2(x,y),
  \end{equation*}
  with $R_2$ bounded by
  \begin{equation*}
    |R_2| \leq \frac{2.2^2\sqrt{1.1}}{12\sqrt{2}} \simeq 0.299.
  \end{equation*}
  In particular, $0.427\leq \phi(0,0) \leq 1$. At the origin, our approximation gives a weaker bound
  than the one deduced from the first approximation.
\end{example}

\section{The Dirichlet--to--Neumann Map}
The Dirichlet--to--Neumann map $\Lambda_u: H^{1/2}(\partial D) \to H^{-1/2}(\partial D)$ is defined by
$\Lambda_u(f) = \partial_n \phi$, where $\phi$ solves the Dirichlet problem
\begin{equation*}
  \begin{cases}
    (\Delta - u)\phi = 0 & \textrm{ in } D\\
    \phi= f & \textrm{ on } \partial D.
  \end{cases}
\end{equation*}
In practical applications the Dirichlet--to--Neumann map is readily determined via boundary measurements at $\partial D$.
For this reason, there is much interest in inverse problems of the following type: given $\Lambda_u$, can we determine $u$?
Traditionally more attention is paid to the Laplace--Beltrami inverse problem---that is, determine $\lambda$ from knowledge of the
Dirichlet--to--Neumann map of $\nabla \lambda \nabla$. We will restrict our attention to the Schr\"odinger operator,
as an application of the theorems in the previous sections.

In his brief but seminal work on the topic \cite{Calderon},
A. P. Calder\'on considers the quadratic form arising from the self--adjoint operator $\Lambda$,
computes its linearization, and shows that the resulting linear map is injective. In a similar spirit we can compute the linearization
of $\Lambda$ using theorem \ref{Dirichlet_variation}.
We'll make use of the following lemma, which is proved in \cite{Martin2}.
\begin{lemma}
  Let $D$ be a bounded domain in $\C$ with smooth, analytic boundary and suppose that $f\in C^1(\overline{D})\cap C^2(D)$.
  If $f=0$ on $\partial D$ then for $\zeta\in \partial D$,
  \begin{equation*}
    \frac{\partial f}{\partial n}(\zeta) = \int_D P_\zeta \Delta f\, dA,
  \end{equation*}
  where $P_\zeta(z) = \partial_n g_z(\zeta)$ is the Poisson kernel of $D$.
\end{lemma}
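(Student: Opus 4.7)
The plan is to start from the Poisson--Jensen (Green's) representation formula that the earlier lemmas in this paper have used repeatedly, transfer the normal derivative onto the Green function via its symmetry, and then justify the exchange of the derivative with the integral.

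Since $f$ vanishes on $\partial D$, the Poisson--Jensen formula already invoked in the proof of lemma \ref{Green_area} gives, for every $z\in D$,
\begin{equation*}
f(z) = \int_D g_z(\xi)\,\Delta f(\xi)\, dA(\xi) = \int_D g_\xi(z)\,\Delta f(\xi)\, dA(\xi),
\end{equation*}
where the second equality uses the symmetry $g_z(\xi)=g_\xi(z)$. Written this way, both $f(\cdot)$ on the left and $g_\xi(\cdot)$ in the integrand vanish as $z$ approaches $\partial D$, so forming a normal difference quotient at a boundary point $\zeta$ is a natural next step.

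Fixing $\zeta\in\partial D$ with outward unit normal $n$ and writing $z_t = \zeta - tn\in D$ for small $t>0$, I would form
\begin{equation*}
\frac{f(z_t)}{-t} = \int_D \frac{g_\xi(z_t) - g_\xi(\zeta)}{-t}\,\Delta f(\xi)\, dA(\xi).
\end{equation*}
By the $C^1(\overline{D})$ hypothesis the left side converges to $\partial_n f(\zeta)$, and for each $\xi\in D$ the integrand converges pointwise to $\partial_n g_\xi(\zeta)\,\Delta f(\xi) = P_\zeta(\xi)\,\Delta f(\xi)$. Thus, if I can pass the limit under the integral, the lemma follows.

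The main obstacle---and what the analyticity of $\partial D$ is really there for---is exactly this passage to the limit. Schwarz reflection extends each $g_\xi(\cdot)$ harmonically across $\partial D$ on a one-sided collar whose width does not depend on $\xi$, provided $\xi$ stays a fixed distance from $\zeta$; interior gradient estimates then bound $\sup_{s\in[0,t]}|\nabla_z g_\xi(\zeta - sn)|$ uniformly by a multiple of $P_\zeta(\xi)$, which is itself bounded on $D$. The mean value theorem dominates the difference quotient by this multiple of $P_\zeta(\xi)$, giving the needed integrable majorant away from the diagonal. For the small piece near $\xi=\zeta$, the boundary vanishing of $g_\xi$ together with a Hopf--type estimate controls $g_\xi(z_t)/t$ uniformly as $t\to 0^+$. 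Splitting the integral this way and applying dominated convergence on each piece delivers the identity; the bookkeeping of these uniform bounds near $\xi=\zeta$ is the most technical part of the argument.
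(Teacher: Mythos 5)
The paper itself does not prove this lemma---it is quoted from \cite{Martin2}---so there is no internal proof to compare against; your route (Green representation with $f=0$ on $\partial D$, symmetry of $g$, normal difference quotient at $\zeta$, dominated convergence) is the natural one and can in principle be carried through. The first half of your argument, up to the pointwise convergence of the difference quotients, is fine.

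The gap is exactly where you flag it: the passage of the limit under the integral is not actually carried out, and the justifications you sketch are wrong as stated. First, $P_\zeta$ is \emph{not} bounded on $D$: already on $\D$ one has $P_\zeta(\xi)=\frac{1}{2\pi}\frac{1-|\xi|^2}{|\xi-\zeta|^2}\to\infty$ as $\xi\to\zeta$; it is bounded only on the piece of $D$ a fixed distance from $\zeta$, and there a constant majorant does the job anyway---but only if $\Delta f$ is integrable, which $f\in C^2(D)$ alone does not give (some integrability of $\Delta f$ is implicitly assumed even for the statement to make sense, and your argument should say so). Second, near $\xi=\zeta$ there is no uniform-in-$t$ pointwise control of $g_\xi(z_t)/t$: for $\xi$ lying on the inward normal segment through $\zeta$, $g_\xi(z_t)\to-\infty$ as $z_t$ passes through the pole $\xi$, so $\sup_{t}|g_\xi(z_t)|/t=\infty$ on that segment. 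What is needed is an a.e.-finite integrable majorant, e.g.\ obtained from a quantitative two-sided boundary estimate of the form $0\le -g(z,\xi)\le C\log\bigl(1+d(z)\,d(\xi)/|z-\xi|^2\bigr)$ (with $d(\cdot)$ the distance to $\partial D$), from which one checks that $\sup_{0<t<t_0}|g_\xi(z_t)|/t$ is integrable near $\zeta$; a Hopf-type lemma points in the wrong direction, since it gives a \emph{lower} bound on $-\partial_n g$, not an upper bound on these difference quotients. If you want to avoid this majorant bookkeeping altogether, a cleaner route is dual: for arbitrary continuous $\psi$ on $\partial D$ let $h$ be its harmonic extension, apply Green's second identity to $(f,h)$ to get $\int_{\partial D}\psi\,\partial_n f\,ds=\int_D h\,\Delta f\,dA$, write $h$ as a Poisson integral and use Fubini to identify the right-hand side with $\int_{\partial D}\psi(\zeta)\bigl(\int_D P_\zeta\,\Delta f\,dA\bigr)ds(\zeta)$; varying $\psi$ and using continuity in $\zeta$ then yields the pointwise identity.
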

With this lemma we can derive an expression for the Dirichlet--to--Neumann map for $\Delta-\epsilon u$.
For given Dirichlet boundary data $f$, define $\phi_\epsilon$ as the solution to the problem
\begin{equation*}
  \begin{cases}
    (\Delta - \epsilon u)\phi = 0 & \textrm{ in } D\\
    \phi= f & \textrm{ on } \partial D.
  \end{cases}
\end{equation*}
Note that $\phi_0$ is the solution to the problem for the Laplace operator.
\begin{theorem}
  For a bounded domain $D\subset \C$ with smooth, analytic boundary we have
  \begin{equation*}
    (\Lambda_{\epsilon u} f)(\zeta) = (\Lambda_0 f)(\zeta) + \epsilon \int_D u P_\zeta \phi_0\, dA + o(\epsilon).
  \end{equation*}
  An alternative expression is given by
  \begin{equation*}
    (\Lambda_{\epsilon u} f)(\zeta) = (\Lambda_0 f)(\zeta)
      + \epsilon \int_{\partial D} f(\xi) \left(\int_D uP_\xi P_\zeta\, dA\right)\, ds(\xi) + o(\epsilon).
  \end{equation*}
\end{theorem}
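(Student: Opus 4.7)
The plan is to differentiate the first-order expansion from Theorem \ref{Dirichlet_variation} in the normal direction at a boundary point, using the preceding lemma (which expresses a normal derivative of a function vanishing on the boundary as an area integral against the Poisson kernel) to make this rigorous. The Poisson integral representation of $\phi_0$ will then convert the first formula into the second.

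First I would set $\psi_\epsilon = \phi_\epsilon - \phi_0$. Since $\Delta\phi_0 = 0$ and $(\Delta - \epsilon u)\phi_\epsilon = 0$, this difference satisfies
\begin{equation*}
  \Delta \psi_\epsilon = \epsilon u \phi_\epsilon \text{ in } D, \qquad \psi_\epsilon = 0 \text{ on } \partial D.
\end{equation*}
Under the boundary regularity hypothesis, standard elliptic theory puts $\psi_\epsilon$ in $C^1(\overline{D}) \cap C^2(D)$, so the preceding lemma applies and yields
\begin{equation*}
  \frac{\partial \psi_\epsilon}{\partial n}(\zeta) = \int_D P_\zeta \Delta \psi_\epsilon \, dA = \epsilon \int_D u\, P_\zeta \phi_\epsilon \, dA.
\end{equation*}
Since $\partial_n \psi_\epsilon(\zeta) = (\Lambda_{\epsilon u} f)(\zeta) - (\Lambda_0 f)(\zeta)$, the first formula will follow once I replace $\phi_\epsilon$ by $\phi_0$ in the integrand at the expense of $o(\epsilon)$. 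For this, Theorem \ref{Dirichlet_variation} gives $\|\phi_\epsilon - \phi_0\|_\infty = O(\epsilon)$, so
\begin{equation*}
  \epsilon \int_D u P_\zeta \phi_\epsilon \, dA = \epsilon \int_D u P_\zeta \phi_0\, dA + \epsilon \int_D u P_\zeta (\phi_\epsilon - \phi_0)\, dA,
\end{equation*}
and the final term is bounded by $\epsilon \|u\|_\infty \|\phi_\epsilon-\phi_0\|_\infty \int_D P_\zeta\, dA$. Since the analyticity of $\partial D$ guarantees that $\sup_{\zeta\in\partial D}\int_D P_\zeta\, dA$ is finite (the total Poisson mass equals $1$ on the boundary, and the kernel is bounded pointwise away from $\zeta$), this remainder is $O(\epsilon^2)$ uniformly in $\zeta$, giving the first expression.

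For the alternative expression, I would use the Poisson integral formula $\phi_0(z) = \int_{\partial D} f(\xi) P_\xi(z)\, ds(\xi)$ to expand the correction term, and then apply Fubini's theorem:
\begin{equation*}
  \epsilon \int_D u(z) P_\zeta(z) \phi_0(z) \, dA(z) = \epsilon \int_{\partial D} f(\xi) \left(\int_D u P_\xi P_\zeta\, dA\right) ds(\xi).
\end{equation*}
Boundedness of $f$ and of $u$, together with the product $P_\xi P_\zeta$ being integrable on $D$ for distinct $\xi,\zeta \in \partial D$, justifies the interchange. The principal obstacle is the uniform-in-$\zeta$ control of the error term, which is handled by the uniform integrability of $P_\zeta$ over $D$; the remainder of the argument is routine once the lemma is in play.
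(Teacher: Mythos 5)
Your proposal is correct and follows essentially the same route as the paper: apply the normal-derivative lemma to $\phi_\epsilon-\phi_0$ (which vanishes on $\partial D$ and satisfies $\Delta(\phi_\epsilon-\phi_0)=\epsilon u\phi_\epsilon$), replace $\phi_\epsilon$ by $\phi_0$ at a cost of $o(\epsilon)$ using the variation theorem, and then pass to the second formula via the Poisson integral representation of $\phi_0$ and Fubini. The only difference is that you spell out the uniformity of the error estimate and the regularity needed to invoke the lemma, details the paper leaves implicit.
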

\begin{proof}
  Note that $\phi_\epsilon - \phi_0$ vanishes on $\partial D$. The lemma above gives
  \begin{equation*}
    (\Lambda_{\epsilon u} f)(\zeta) - (\Lambda_0 f)(\zeta) = \partial_n(\phi_\epsilon - \phi_0)(\zeta)
    = \int_D P_\zeta \Delta(\phi_\epsilon - \phi_0)\, dA = \int_D uP_\zeta \phi_\epsilon\, dA.
  \end{equation*}
  Since $\phi_\epsilon = \phi_0 + o(\epsilon)$, we obtain
  \begin{equation*}
    (\Lambda_{\epsilon u} f)(\zeta) = (\Lambda_0 f)(\zeta) + \epsilon \int_D u P_\zeta \phi_0\, dA + o(\epsilon).
  \end{equation*}
  Finally, note that $\phi_0$ can be written as a Poisson integral; this gives
  \begin{equation*}
    \int_D u P_\zeta \phi_0\, dA = \int_D u(\tau) P_\zeta(\tau)  \left(\int_{\partial D} P_\xi(\tau) f(\xi)\, ds(\xi)\right)\, dA(\tau)
    = \int_{\partial D} f(\xi)\left( \int_D u P_\xi P_\zeta\, dA\right)\, ds(\xi). \qedhere
  \end{equation*}
\end{proof}

\bibliographystyle{amsplain}

\vspace{0.2in}
\textsc{Department of Mathematics, Vanderbilt University, Nashville, TN, 37240} \\
\url{charles.z.martin@vanderbilt.edu}

\end{document}